\documentclass[12pt,reqno]{amsart}

\usepackage{amsmath, amsfonts, amsthm, amssymb, color}

\newtheorem{Theorem}{Theorem}[section]
\newtheorem{Lemma}{Lemma}[section]
\newtheorem{Proposition}{Proposition}[section]

\theoremstyle{definition}
\newtheorem{Definition}{Definition}[section]

\theoremstyle{remark}
\newtheorem{Remark}{Remark}[section]

\numberwithin{equation}{section}

\renewcommand{\u}{{\bf u}}

\newcommand{\R}{{\mathbb R}}
\newcommand{\Dv}{{\rm div}}

\def\f{\frac}

\def\hf1{^\f{1}{1-\xi^2}}

\def\be{\begin{equation}}
\def\en{\end{equation}}
\def\bs{\begin{split}}
\def\es{\end{split}}

\usepackage{esint}

\newcommand{\id}{\mathrm{Id}}
\newcommand{\sym}{\mathrm{Sym}}

\newcommand{\trace}{\mathrm{trace}}

\author{Cheng Yu}
\address{Department of Mathematics,University of Florida, FL, USA.}
\email{chengyu@ufl.edu}

\title[On the Inertial Limit of Weak Solutions]
{Inertial Limit  of global weak solutions for Compressible Navier--Stokes}

\keywords{inertial limit, weak solutions, kinetic energy}
\subjclass[2000]{}

\date{\today}

\begin{document}

\maketitle

\begin{abstract}

We investigate the inertial  limit of the compressible Navier--Stokes system 
posed on the $3$-dimensional torus,
and allowing for regions of  vacuum. Considering global-in-time finite-energy weak solutions 
of a scaled system, we rigorously establish convergence to a limiting system in which the momentum 
equation reduces to a stationary elliptic balance between pressure and viscous forces. 
In this limit, the scaled kinetic energy vanishes, reflecting an overdamped regime, 
and the limiting weak solution satisfies an exact energy equality. 
Our analysis relies on uniform a priori estimates, renormalized techniques, and compactness arguments 
in the Lions--Feireisl framework, providing a mathematically rigorous analysis for the overdamped 
dynamics arising from vanishing inertia in compressible viscous flows.

\end{abstract}

\medskip

\section{Introduction and overview}

The compressible Navier--Stokes equations provide a fundamental model for viscous compressible flows, incorporating both inertial transport and dissipative mechanisms.
Depending on the relative magnitude of inertia, viscosity, and pressure forces, different asymptotic regimes arise, leading to reduced models with distinct mathematical and physical structures.
This paper is devoted to the rigorous analysis of one such regime, namely the \emph{inertial limit}, in which inertial effects become negligible compared to pressure and viscous forces.

\medskip

We consider the following scaled compressible Navier--Stokes system posed on the $3$-dimensional torus $\mathbb{T}^3$:
\begin{equation}\label{eq:CNS-scaled}
\begin{cases}
&\partial_t\rho^\varepsilon + \Dv (\rho^\varepsilon u^\varepsilon)=0,\\
&
\varepsilon\partial_t(\rho^\varepsilon u^\varepsilon)+\varepsilon\Dv (\rho^\varepsilon u^\varepsilon\otimes u^\varepsilon)+\nabla p(\rho^\varepsilon)
= \nu\Delta u^\varepsilon + (\nu+\lambda)\nabla(\Dv u^\varepsilon),
\end{cases}
\end{equation}
where $\rho^\varepsilon=\rho^\varepsilon(t,x)\ge0$ denotes the density and $u^\varepsilon=u^\varepsilon(t,x)\in\mathbb{R}^3$ the velocity field.
The constants $\nu>0$ and $\nu+\lambda\ge0$ represent viscosity coefficients, and the pressure is assumed to be isentropic of the form
\[
p(\rho)=\rho^\gamma,\qquad \gamma>1.
\]

\medskip
The small parameter $\varepsilon>0$ measures the strength of inertia relative to pressure and viscous effects.
Formally, the scaling corresponds to a regime of \emph{strong friction} or \emph{high viscosity}, in which momentum relaxes rapidly and acceleration becomes negligible.
Such limits arise naturally, for instance, in highly viscous flows, porous media models, or slow dynamics dominated by internal stresses rather than transport.

\medskip

The system is supplemented with initial data
\begin{equation}\label{initial data}
\rho^\varepsilon(0,x)=\rho_0^\varepsilon(x),\qquad
(\rho^\varepsilon u^\varepsilon)(0,x)=m_0^\varepsilon(x).
\end{equation}
Throughout the paper we investigate the asymptotic behavior of finite-energy weak solutions as $\varepsilon\to0$. In short, we prove that finite-energy weak solutions converge to a pressure–viscosity dominated system, with complete loss of inertia.

\medskip

Formally neglecting the inertial terms in the momentum equation, the limit $\varepsilon\to0$ leads to the reduced system
\begin{equation}
\label{system of limit}
\begin{cases}
\partial_t \rho + \Dv (\rho u) = 0, \\[6pt]
\nabla p(\rho) = \nu\Delta u + (\nu + \lambda)\nabla(\Dv  u),
\end{cases}
\end{equation}
supplemented with the initial condition
\begin{equation}
\label{initial data for limit system}
\rho(0,\cdot)=\rho_0(\cdot)\quad\text{in }\mathbb{T}^3.
\end{equation}

\medskip

The limiting system exhibits a fundamental structural change.
While the original Navier-Stokes equations is a coupled system with hyperbolic transport with parabolic dissipation, the limit system is \emph{elliptic in the velocity} at each fixed time.
The velocity field is instantaneously slaved to the density through a pressure--viscosity balance, reflecting a complete loss of inertia.
As a result, the evolution becomes overdamped: momentum no longer propagates dynamically, but rather adjusts quasi-statically to the evolving density field. For the physical background, we refer the readers to Lions \cite{Lions} (see Chapter~7, Vol.~2).
From a modeling perspective, the limiting pressure–viscosity balance can be viewed as a Brinkman-type relation \cite{BellaOschmann2022,HNO}, linking the inertial limit of compressible flows to classical porous-media and filtration models.

\medskip

From a physical viewpoint, this regime corresponds to a situation in which kinetic energy cannot be sustained.
Although the velocity field need not vanish pointwise, its contribution to the total energy becomes negligible at the considered scaling.
One of the central objectives of this work is to show that this intuition can be made fully rigorous, even in the presence of vacuum and weak solutions.

\medskip

\subsection{Definitions}
We allow for localized regions of low density, including vacuum, and work within the framework of finite-energy weak solutions in the sense of Lions \cite{Lions}, Feireisl--Novotný--Petzeltová \cite{FNP}, and Feireisl \cite{F04}.  These results are analogous in 2D and 3D, except for the admissible range of the adiabatic exponent $\gamma$. Therefore, from this point onward, we formulate and analyze the problem in 3D. The main result and definitions of this paper remain valid in 2D setting as well.

\begin{Definition}
\label{weak solutions of CNS}
A pair $(\rho_\varepsilon, u_\varepsilon)$ is called a \emph{finite-energy weak solution} of \eqref{eq:CNS-scaled}-\eqref{initial data} on $[0,T]\times \mathbb{T}^3$ if the following conditions hold:

\subsection*{1. Regularity}
\[
\rho_\varepsilon \ge 0, \quad \rho_\varepsilon \in L^\infty(0,T; L^\gamma(\mathbb{T}^3)) \quad \text{for some } \gamma>1,
\]
\[
\rho_\varepsilon u_\varepsilon \in L^\infty(0,T; L^{\frac{2\gamma}{\gamma+1}}(\mathbb{T}^3)), \quad 
u_\varepsilon \in L^2(0,T; H^1(\mathbb{T}^3; \mathbb{R}^3)).
\]

\subsection*{2. Continuity Equation holds in the renormalized senese}

For every test function $\phi \in C_c^\infty([0,T)\times \mathbb{T}^3)$,
\begin{equation*}
\begin{split}
\int_0^T \int_{\mathbb{T}^3} \Big( b(\rho_\varepsilon) \partial_t \phi + b(\rho_\varepsilon) u_\varepsilon \cdot \nabla \phi \Big) \, dx\, dt
&+\int_{\mathbb{T}^3}(b'(\rho_{\varepsilon})\rho_{\varepsilon}-b(\rho_{\varepsilon}))  \Dv u_{\varepsilon}\phi\,dx
\\&+ \int_{\mathbb{T}^3} b(\rho_0^\varepsilon(x)) \phi(0,x)\, dx = 0.
\end{split}
\end{equation*}
This is equivalent to
\[
\partial_t b(\rho)
+ \Dv \bigl(b(\rho)u\bigr)
+ \bigl(b'(\rho)\rho - b(\rho)\bigr)\Dv u = 0
\]
in the sense of distributions on $(0,T)\times\mathbb{T}^3$,
for any $b \in C^{1}(\mathbb{R})$ such that
\[
b'(z) \equiv 0 \quad \text{for all } z \ge M,
\]
where the constant $M$ may depend on the choice of $b$.

\medskip

\subsection*{3. Weak Formulation of the Momentum Equation}

For all vector-valued test functions $\psi \in C_c^\infty([0,T)\times \mathbb{T}^3; \mathbb{R}^3)$,
\begin{align*}
&\varepsilon \int_0^T \int_{\mathbb{T}^3}  \Big( \rho_\varepsilon u_\varepsilon \cdot \partial_t \psi + \rho_\varepsilon u_\varepsilon \otimes u_\varepsilon : \nabla \psi \Big) \, dx\, dt
+ \int_0^T \int_{\mathbb{T}^3} \rho_\varepsilon^{\gamma} \, \Dv \psi \, dx\, dt \\
&= - \int_0^T \int_{\mathbb{T}^3} \Big( \nu \nabla u_\varepsilon : \nabla \psi + (\nu + \lambda)(\Dv u_\varepsilon)(\Dv \psi) \Big) \, dx\, dt 
+ \varepsilon \int_{\mathbb{T}^3} m_0^\varepsilon(x) \cdot \psi(0,x) \, dx.
\end{align*}

\subsection*{4. Energy Inequality (Finite Energy)}

For a.e. $t \in [0,T]$,
\begin{equation}
\begin{split}
\int_{\mathbb{T}^3} \Big( \frac{\varepsilon}{2} \rho_\varepsilon |u_\varepsilon|^2 + \frac{\rho_{\varepsilon}^{\gamma}}{\gamma-1} \Big) dx
&+ \int_0^t \int_{\mathbb{T}^3} \Big( \nu |\nabla u_\varepsilon|^2 + (\nu+\lambda) |\Dv u_\varepsilon|^2 \Big) dx\, ds
\\&
\le \int_{\mathbb{T}^3} \Big( \frac{\varepsilon}{2} \rho_0^\varepsilon |u_0^\varepsilon|^2 + \frac{(\rho_0^{\varepsilon})^{\gamma}}{\gamma-1} \Big) dx.
\end{split}
\end{equation}
\end{Definition}

Below $\mathcal{D}'$ denotes distributions in space-time and $C_c^\infty$ test functions are compactly supported.

\begin{Definition}\label{def weak solution for limit system}
A pair \((\rho,u)\) is called a \emph{weak solution} of \eqref{system of limit} and \eqref{initial data for limit system} on \([0,T]\times\mathbb{T}^3\)  if the following hold.

\medskip\noindent\textbf{(Regularity).} The functions satisfy, for some admissible exponent \(\gamma>1\),
\[
\rho\in L^\infty(0,T;L^\gamma(\mathbb{T}^3)),\qquad \rho\ge 0 \ \text{a.e.},
\]
and
\[
u\in L^2(0,T;H^1(\mathbb{T}^3;\mathbb{R}^3)).
\]

\medskip\noindent\textbf{{ Continuity Equation holds in the renormalized sense}}
For every test function $\varphi \in C_c^\infty([0,T)\times \mathbb{T}^3)$,
\begin{equation*}
\begin{split}
\int_0^T \int_{\mathbb{T}^3} \Big( b(\rho_\varepsilon) \partial_t \varphi + b(\rho_\varepsilon) u_\varepsilon \cdot \nabla \varphi \Big) \, dx\, dt
&+\int_{\mathbb{T}^3}(b'(\rho_{\varepsilon})\rho_{\varepsilon}-b(\rho_{\varepsilon}))  \Dv u_{\varepsilon}\varphi\,dx
\\&+ \int_{\mathbb{T}^3} b(\rho_0^\varepsilon(x)) \varphi(0,x)\, dx = 0.
\end{split}
\end{equation*}


\medskip\noindent\textbf{(Weak momentum / elliptic relation).} For almost every \(t\in(0,T)\) the pair \((\rho(t,\cdot),u(t,\cdot))\) satisfies the elliptic type equation in weak form: for every vector-valued test function \(\psi\in C_c^\infty(\mathbb{T}^3;\mathbb{R}^3)\),
\begin{equation}\label{eq:weak-mom}
\begin{split}
\int_{\mathbb{T}^3} \rho^{\gamma}\,\Dv \psi(x)\,dx
&+ \nu\int_{\mathbb{T}^3}\nabla u(t,x)\,:\,\nabla\psi(x)\,dx
\\&+ (\nu+\lambda)\int_{\mathbb{T}^3}(\Dv u(t,x))(\Dv \psi(x))\,dx
=0.
\end{split}
\end{equation}
Here \(\nabla u : \nabla\psi = \sum_{i,j}\partial_i u_j\,\partial_i\psi_j\).

\end{Definition}

\medskip

\subsection{Main Result}
Before stating our main results, we recall that the objects we study are well-posed in the weak sense. 
The existence of global weak solutions was established in \cite{FNP} for any $\gamma>\frac{3}{2}$  in3D and for any $\gamma>1$ in 2D.
In the following part of this paper, we need this restriction for $\gamma$.
This follows from the foundational work of Lions \cite{Lions} and the further development 
by Feireisl--Novotný--Petzeltová \cite{FNP} and Feireisl \cite{F04}, which also allow for the presence of vacuum. These results guarantee the existence of pairs $(\rho_\varepsilon, u_\varepsilon)$ 
satisfying the regularity, renormalized continuity, momentum, and energy inequalities stated in Definition~\ref{weak solutions of CNS}.

\medskip

For the limiting system \eqref{system of limit}, which reduces the momentum equation to a stationary elliptic relation for the velocity, 
existence of weak solutions is also established in Lions \cite{Lions} (see Chapter~7, Vol.~2).
These existence results provide a rigorous foundation for our subsequent analysis of the inertial limit, 
as they guarantee that the sequences of weak solutions considered in the limit process are indeed meaningful.

\medskip

While there is a substantial body of work \cite{BDGL,BFH,CVWY,Feireisl-incom,FKM,FKMV,Fu,KM,KM2,LM,MS0,MK,Ukai} on singular limits and related asymptotic regimes for compressible fluids, rigorous results on the inertial (small-mass / overdamped) limit for the compressible Navier–Stokes equations appear to be scarce. This highlights the novelty and significance of the current paper.

\medskip

After introducing precise notions of weak solutions for both the scaled system and the limiting system, we establish compactness and convergence of global weak solutions as $\varepsilon\to0$, under the well-preparedness assumption
\begin{equation}\label{intial energy limit}
\varepsilon\int_{\mathbb{T}^3} \rho_0^\varepsilon \lvert u_0^\varepsilon\rvert^2\, dx \to 0 \qquad \text{as } \varepsilon\to0.
\end{equation}
This condition rules out the persistence of an initial inertial layer in the limit. Throughout the paper, we assume that \eqref{intial energy limit} holds.

\begin{Theorem} [Inertial limit of finite-energy weak solutions] 
\label{main result}
For any $\gamma>\frac{3}{2}$, let 
$(\rho_\varepsilon, u_\varepsilon)$  be  a \emph{finite-energy weak solution} of \eqref{eq:CNS-scaled}-\eqref{initial data} on $[0,T]\times \mathbb{T}^3$.
Then there exists a subsequence (still denoted by \(\varepsilon\to0\)) such that

\begin{equation*}
\begin{split}
&\rho_{\varepsilon}\to \rho\quad\text{ weakly in }L^{\infty}(0,T;L^{\gamma}(\mathbb{T}^3)),  \text{ and } \rho_{\varepsilon}\to \rho \text{ in } L^1(0,T;L^1(\mathbb{T}^3)),
\\& u_\varepsilon\to u\quad\text{ weakly in } L^2(0,T;H^1(\mathbb{T}^3));
\end{split}
\end{equation*}
and the weak limit $(\rho, u)$ is a weak solution  of \eqref{system of limit} and \eqref{initial data for limit system} on \([0,T]\times\mathbb{T}^3\), 
and the weak solution satisfies the following energy equality
$$\int_{\mathbb{T}^3}\frac{\rho^{\gamma}}{\gamma-1}\,dx+\int_0^T\int_{\mathbb{T}^3}\nu |\nabla u|^2+(\nu+\lambda)|\Dv u|^2\,dx\,dt=\int_{\mathbb{T}^3}\frac{\rho_0^{\gamma}}{\gamma-1}\,dx$$
Moreover, the scaled kinetic energy vanishes in the limit: for every fixed $t>0$,
\[
\varepsilon \int_{\mathbb{T}^3} \rho^\varepsilon(t,x)\, \lvert u^\varepsilon(t,x)\rvert^2 \, dx
\longrightarrow 0
\quad \text{as } \varepsilon \to 0.
\]

\end{Theorem}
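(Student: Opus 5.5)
The plan is to follow the Lions--Feireisl compactness scheme, with the simplification that inertia disappears in the limit. The proof has four steps.

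\textbf{Step 1: uniform bounds.} From the energy inequality and \eqref{intial energy limit} (assuming also that $\int (\rho_0^\varepsilon)^\gamma$ is bounded and $\rho_0^\varepsilon\to\rho_0$ strongly in $L^\gamma$), we get uniform bounds on $\rho_\varepsilon$ in $L^\infty(0,T;L^\gamma)$ and on $\nabla u_\varepsilon$ in $L^2(0,T;L^2)$. We also get $\sqrt{\varepsilon}\sqrt{\rho_\varepsilon}u_\varepsilon$ bounded in $L^\infty(0,T;L^2)$. Mass conservation together with a Poincaré-type argument (controlling the mean of $u_\varepsilon$ via $\int\rho_\varepsilon u_\varepsilon$, or simply working with $\nabla u_\varepsilon$, as is standard on the torus) gives $u_\varepsilon$ bounded in $L^2(0,T;H^1)$.

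Hence $\varepsilon\rho_\varepsilon u_\varepsilon=\sqrt\varepsilon\,\sqrt{\rho_\varepsilon}\,(\sqrt\varepsilon\sqrt{\rho_\varepsilon}u_\varepsilon)$ is $O(\sqrt\varepsilon)$ in $L^\infty(L^{2\gamma/(\gamma+1)})$. The convective term satisfies $\varepsilon\rho_\varepsilon u_\varepsilon\otimes u_\varepsilon=O(\sqrt\varepsilon)$ in $L^2(L^{6\gamma/(4\gamma+3)})$, which is a Lebesgue space with exponent above $1$ since $\gamma>3/2$. Consequently the inertial terms in the weak momentum equation tend to zero in $\mathcal D'$.

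We then use the Bogovskii/inverse-divergence test function $\psi=\nabla\Delta^{-1}(\rho_\varepsilon^\theta-\overline{\rho_\varepsilon^\theta})$ to obtain improved integrability $\rho_\varepsilon\in L^{\gamma+\theta}$ uniformly. The $\partial_t\psi$ term carries a factor $\varepsilon$, so this only helps. This gives equi-integrability of the pressure, so $\rho_\varepsilon^\gamma\rightharpoonup\overline{p}$ weakly in $L^1$. Extracting subsequences, and using the continuity equation to get $\rho_\varepsilon\to\rho$ in $C([0,T];L^\gamma_{weak})$, the limit satisfies $\nabla\overline p=\nu\Delta u+(\nu+\lambda)\nabla\Dv u$ and $\partial_t\rho+\Dv(\rho u)=0$. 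The product $\rho_\varepsilon u_\varepsilon\rightharpoonup\rho u$ follows from the compact embedding of $L^\gamma$ into $H^{-1}$, since $\gamma>6/5$.

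\textbf{Step 2: effective viscous flux, the main step.} Apply $\Dv\Delta^{-1}$ to the momentum equation, test against $T_k(\rho_\varepsilon)\phi$, and compare with the limit equation tested against $\overline{T_k(\rho)}\phi$. Here $T_k$ denotes the truncation at level $k$ and $\overline{\,\cdot\,}$ denotes weak limits.

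In the classical proof the commutator (div--curl) lemma is needed for the term $\rho u\otimes u$ and the time derivative. Here both terms carry a factor $\varepsilon$. They are bounded by $\sqrt\varepsilon$ times bounded quantities, because $T_k(\rho_\varepsilon)\in L^\infty$ and $\partial_tT_k(\rho_\varepsilon)$ is controlled through the renormalized equation. Hence they vanish, and we obtain directly
\[
\overline{p\,T_k(\rho)}-\overline p\,\overline{T_k(\rho)}=(2\nu+\lambda)\big(\overline{T_k(\rho)\Dv u}-\overline{T_k(\rho)}\Dv u\big).
\]
Equivalently, since the limit is elliptic we have $\overline p-(2\nu+\lambda)\Dv u=\text{const}(t)$, and for $\varepsilon>0$ we have $p(\rho_\varepsilon)-(2\nu+\lambda)\Dv u_\varepsilon=c_\varepsilon(t)+\varepsilon(\cdot)$ with a small remainder.

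From this identity we run the oscillation-defect estimate and the renormalized equation for $L_k(\rho)=\rho\log$-type functions, exactly as in Feireisl, to show that $\overline{\rho\log\rho}=\rho\log\rho$. This gives strong convergence $\rho_\varepsilon\to\rho$ in $L^1$, hence $\overline p=\rho^\gamma$, and the limit $\rho$ is renormalized (it is in $L^2$ if $\gamma\ge 9/5$; otherwise one uses the oscillation defect measure). I expect this to be the main obstacle, specifically the careful handling of the $\varepsilon\partial_t$ term when testing, where one must use time mollification or the weak-in-time formulation.

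\textbf{Step 3: energy equality and vanishing kinetic energy.} Lower semicontinuity passed through the energy inequality gives
\[
\int\frac{\rho^\gamma}{\gamma-1}(t)+\int_0^t\!\!\int\nu|\nabla u|^2+(\nu+\lambda)|\Dv u|^2\le\int\frac{\rho_0^\gamma}{\gamma-1}.
\]
For the reverse inequality, test the limit elliptic equation with $u$ itself, which is legitimate since $u\in H^1$ at a.e. $t$ and $\rho^\gamma\in L^2$ thanks to the improved integrability (or via truncation). This gives $\int\nu|\nabla u|^2+(\nu+\lambda)|\Dv u|^2=\int\rho^\gamma\Dv u$. The renormalized continuity equation with $b(\rho)=\rho^\gamma/(\gamma-1)$, justified by truncation, gives $\frac{d}{dt}\int\frac{\rho^\gamma}{\gamma-1}=-\int\rho^\gamma\Dv u$. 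Together these yield the equality.

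Finally, write $E_\varepsilon(t)$ for the left side of the $\varepsilon$-energy inequality at time $t$. We have $\limsup E_\varepsilon(t)\le\int\rho_0^\gamma/(\gamma-1)$, minus $\liminf$ of the dissipation. Strong convergence gives $\int\rho_\varepsilon^\gamma(t)\to\int\rho^\gamma(t)$ for all $t$: the $L^1$ strong convergence holds at every $t$ from continuity in time plus the equality of limits, using $C([0,T];L^\gamma_{weak})$ and lower semicontinuity. Combined with the energy equality, this forces $\varepsilon\int\rho_\varepsilon|u_\varepsilon|^2(t)\to0$.
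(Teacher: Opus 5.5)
Your outline follows the paper's proof essentially step for step: energy and Bogovskii bounds, FNP effective-flux compactness with the inertial terms killed by a factor $\sqrt{\varepsilon}$, the energy equality by testing the elliptic relation with $u$ and renormalizing with $b(\rho)=\rho^\gamma/(\gamma-1)$, and vanishing kinetic energy by lower semicontinuity against that equality. Your identity $\overline{p}-(2\nu+\lambda)\Dv u=c(t)$ is a cleaner route to the paper's Lemma~\ref{pressure bound}. You also rightly make explicit the hypothesis $\rho_0^\varepsilon\to\rho_0$ in $L^\gamma$, which the paper uses tacitly.

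However, one step fails, and most of Steps~1--2 rests on it. You claim that a Poincar\'e argument ``controlling the mean of $u_\varepsilon$ via $\int\rho_\varepsilon u_\varepsilon$'' bounds $u_\varepsilon$ in $L^2(0,T;H^1)$. On the torus, $\int\rho_\varepsilon u_\varepsilon\,dx$ is conserved and equals $\int m_0^\varepsilon\,dx$. But \eqref{intial energy limit} only yields $|\int m_0^\varepsilon\,dx|\le(\int\rho_0^\varepsilon\,dx)^{1/2}(\int\rho_0^\varepsilon|u_0^\varepsilon|^2\,dx)^{1/2}=o(\varepsilon^{-1/2})$.

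Concretely, $\rho_\varepsilon\equiv1$, $u_\varepsilon\equiv\varepsilon^{-1/4}e_1$ is an exact solution satisfying every hypothesis, since $\varepsilon\int\rho_0^\varepsilon|u_0^\varepsilon|^2\,dx=|\mathbb{T}^3|\varepsilon^{1/2}\to0$. Yet $\|u_\varepsilon\|_{L^2(0,T;L^2)}\to\infty$. Moreover, the system is Galilean invariant. Boosting by this velocity a solution whose density varies in $x_1$ keeps all hypotheses but makes $\partial_t\rho_\varepsilon$ blow up, so the time compactness of the density is lost as well.

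``Working only with $\nabla u_\varepsilon$'' does not help, because the transport term $\Dv(\rho_\varepsilon u_\varepsilon)$ sees the mean of $u_\varepsilon$. Three of your steps need $u_\varepsilon$ bounded in $L^2(L^6)$: the bound on $\partial_t\rho_\varepsilon$ (hence $C([0,T];L^\gamma_{weak})$), the estimate $\varepsilon\rho_\varepsilon u_\varepsilon\otimes u_\varepsilon=O(\sqrt{\varepsilon})$ in $L^2(L^{6\gamma/(4\gamma+3)})$, and the convergence $\rho_\varepsilon u_\varepsilon\rightharpoonup\rho u$.

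The paper has exactly the same hole: it passes from the bound on $\nabla u_\varepsilon$ straight to weak convergence in $L^2(0,T;W^{1,2})$. So the theorem needs an extra hypothesis such as $\sup_\varepsilon|\int m_0^\varepsilon\,dx|<\infty$, for instance bounded unscaled initial kinetic energy or zero momentum. With it, the inequality $|\bar u_\varepsilon(t)|\int\rho_0^\varepsilon\,dx\le|\int m_0^\varepsilon\,dx|+C\|\rho_\varepsilon(t)\|_{L^{6/5}}\|\nabla u_\varepsilon(t)\|_{L^2}$ closes the $H^1$ bound, provided the mass is bounded below. The limit then inherits $\int\rho u\,dx=\lim\int m_0^\varepsilon\,dx$. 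This is also what fixes the additive constant in $u$, which the elliptic relation leaves undetermined on $\mathbb{T}^3$.

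Two minor points in Step~3. First, $\rho^\gamma\in L^2$ comes from the elliptic relation, not from the $L^{\gamma+\theta}$ bound of Step~1. Second, you neither need nor have justified $\int\rho_\varepsilon^\gamma(t)\to\int\rho^\gamma(t)$ at every $t$; space-time $L^1$ convergence does not give it. Weak convergence of $\rho_\varepsilon(t)$ in $L^\gamma$ supplies the $\liminf$ inequality. Together with the limit energy equality, that is all that is used, exactly as in the paper.
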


\begin{Remark}
Note that the limiting velocity field is determined instantaneously by the density 
through an elliptic relation. In particular, the limiting momentum equation does not 
generate an independent time evolution for the velocity, reflecting the overdamped 
nature of the limit dynamics.
\end{Remark}

Our main result shows that, under a natural assumption on finite initial kinetic energy,
finite-energy weak solutions of the scaled compressible Navier--Stokes system converge (up to a subsequence) to weak solutions of the reduced pressure--viscous system.
Moreover, we prove that the scaled kinetic energy vanishes in the limit and that the limiting solution satisfies an \emph{exact energy equality}.
This contrasts sharply with other singular limits, such as the vanishing viscosity limit, where anomalous dissipation may persist.

\medskip

The paper is organized as follows.
In Section~2, we establish compactness and stability properties of global weak solutions to the scaled
compressible Navier--Stokes system, relying on renormalization techniques in the sense of Lions~\cite{Lions}.
In Section~3, we prove that weak solutions of the limiting system satisfy an exact energy balance.
Finally, in Section~4, we complete the proof of the main convergence result and, in particular,
show that the scaled kinetic energy vanishes as $\varepsilon \to 0$.

\medskip

\section{Weak compactness as $\varepsilon\to0$}
The goal of this section is to prove the weak compactness of global weak solutions
to the scaled compressible Navier--Stokes equations as $\varepsilon \to 0$.

By the classical existence theories \cite{Lions, FNP, F04}, for any fixed
$\varepsilon>0$ and any adiabatic exponent $\gamma>\frac{3}{2}$, there exists
a global-in-time finite-energy weak solution
$(\rho_{\varepsilon},u_{\varepsilon})$ to the scaled compressible
Navier--Stokes system \eqref{eq:CNS-scaled}--\eqref{initial data} in 3D. 
These solutions satisfy the standard energy inequality, which provides
uniform-in-$\varepsilon$ bounds on the density and velocity. In particular, such solutions satisfy the following energy inequality: for a.e.
$t\ge 0$,
\begin{equation}
\begin{split}
\label{energy inequality for CNS}
\int_{\mathbb{T}^3} \Big(
\frac{\varepsilon}{2}\rho_\varepsilon |u_\varepsilon|^2
+ \frac{\rho_{\varepsilon}^{\gamma}}{\gamma-1}
\Big)\,dx
&+ \int_0^t \int_{\mathbb{T}^3}
\Big(
\nu |\nabla u_\varepsilon|^2
+ (\nu+\lambda) |\Dv u_\varepsilon|^2
\Big)\,dx\,ds
\\
&\le
\int_{\mathbb{T}^3} \Big(
\frac{\varepsilon}{2}\rho_0^\varepsilon |u_0^\varepsilon|^2
+ \frac{(\rho_0^\varepsilon)^{\gamma}}{\gamma-1}
\Big)\,dx .
\end{split}
\end{equation}

Once the right-hand side of the above energy inequality is finite, we immediately
deduce the following uniform bounds:
\begin{equation}
\label{uniform bounds from energy1}
\sqrt{\varepsilon}\,\sqrt{\rho_{\varepsilon}}\,u_{\varepsilon}
\in L^{\infty}\big(0,T;L^2(\mathbb{T}^3)\big),
\end{equation}
\begin{equation}
\label{uniform bounds from energy2}
\nabla u_{\varepsilon}
\in L^2\big(0,T;L^2(\mathbb{T}^3)\big),
\end{equation}
and
\begin{equation}
\label{uniform bounds from energy3}
\rho_{\varepsilon}
\in L^{\infty}\big(0,T;L^{\gamma}(\mathbb{T}^3)\big).
\end{equation}

By the definition of weak solutions, the pair
$(\rho_{\varepsilon},u_{\varepsilon})$ satisfies the continuity equation
in the sense of renormalized solutions.
Following \cite{FNP}, one can regularize the renormalized equation to obtain
\begin{equation}
\label{renormalized regularized}
\partial_t S_m\big(b(\rho_{\varepsilon})\big)
+ \Dv\!\left(
S_m\big(b(\rho_{\varepsilon})u_{\varepsilon}\big)
+ S_m\big(b'(\rho_{\varepsilon})\rho_{\varepsilon}
- b(\rho_{\varepsilon})\big)\Dv u_{\varepsilon}
\right)
= r_m ,
\end{equation}
where the remainder term satisfies
\[
r_m \to 0 \quad \text{in } L^2\big(0,T;L^2(\mathbb{T}^3)\big)
\quad \text{as } m\to\infty,
\]
provided that $b$ is a uniformly bounded $C^1$ function. Here $S_m(f)=f*\eta_m$ denotes the standard spatial mollification operator.

Consequently, by adapting the argument of Section~4.1 in \cite{FNP} and using
the uniform bounds
\eqref{uniform bounds from energy1}--\eqref{uniform bounds from energy3},
we obtain the following lemma.

\begin{Lemma}
\label{effective 1}
For any $\gamma>\frac{3}{2}$, then there exists a positive number $\theta$, only depending on $\gamma$, such that 
$$\int_0^T\int_{\mathbb{T}^3}\rho_{\varepsilon}^{\gamma+\theta}\,dx\,dt\leq C,$$
where $C$ is a constant.
\end{Lemma}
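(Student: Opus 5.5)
We prove the higher integrability by the standard Bogovskii-type test, as in Section 4.1 of \cite{FNP}, keeping track of the powers of $\varepsilon$. We test the momentum equation of Definition~\ref{weak solutions of CNS} with
\[
\psi(t,x)=\phi(t)\,\nabla\Delta^{-1}\Big(\rho_\varepsilon^\theta-\fint_{\mathbb{T}^3}\rho_\varepsilon^\theta\,dx\Big),
\]
where $\phi\in C_c^\infty(0,T)$ and $\Delta^{-1}$ is the inverse Laplacian on zero-mean functions on $\mathbb{T}^3$. Since $\rho^\theta$ is not an admissible renormalization, we work with $b(\rho)=T_k(\rho)^\theta$ for a smooth truncation $T_k$. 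We also use the mollified equation \eqref{renormalized regularized}, and let $m\to\infty$ and then $k\to\infty$ at the end. Because $\Dv\psi=\phi(\rho_\varepsilon^\theta-\fint\rho_\varepsilon^\theta)$, the pressure term gives
\[
\int_0^T\!\!\int \phi\,\rho_\varepsilon^{\gamma+\theta}\,dx\,dt=\int_0^T\!\!\int \phi\,\rho_\varepsilon^\gamma\fint\rho_\varepsilon^\theta\,dx\,dt+\text{(viscous terms)}+\varepsilon\,\text{(inertial terms)}.
\]
We need to bound each term on the right uniformly in $\varepsilon$.

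\textbf{Step 1 (easy terms).} The lower-order pressure term is bounded by \eqref{uniform bounds from energy3} provided $\theta\le\gamma$. The viscous terms are bounded by $\|\nabla u_\varepsilon\|_{L^2L^2}\,\|\rho_\varepsilon^\theta\|_{L^2L^2}$ via elliptic regularity, since $\|\nabla\psi\|_{L^2}\lesssim\|\rho^\theta\|_{L^2}$. This requires $2\theta\le\gamma$.

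\textbf{Step 2 (inertial terms).} These carry a factor $\varepsilon$, and this is the point to check carefully. The first is $\varepsilon\int\rho u\cdot\partial_t\psi$. We compute $\partial_t\psi$ from the renormalized continuity equation: $\partial_t\rho^\theta=-\Dv(\rho^\theta u)-(\theta-1)\rho^\theta\Dv u$. This gives $\partial_t\psi=-\phi\nabla\Delta^{-1}\Dv(\rho^\theta u)+\dots$, bounded in $L^2(0,T;L^{q})$ for a suitable $q$. The second is $\varepsilon\int\rho u\otimes u:\nabla\psi$. Here we write $\varepsilon\rho|u|^2=\sqrt{\varepsilon}\cdot(\sqrt\varepsilon\sqrt\rho u)\cdot(\sqrt\rho u)$ and use $u_\varepsilon\in L^2L^6$. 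Poincaré applies once the mean of $u$ is controlled through the bound on the momentum mean, or alternatively one uses $\varepsilon\rho u\otimes u$ directly. We then use $\sqrt\varepsilon\sqrt\rho u\in L^\infty L^2$, $\rho\in L^\infty L^\gamma$, and $\nabla\psi\in L^\infty L^{\gamma/\theta}$. The momentum term is handled the same way: $\sqrt\varepsilon\rho u=\sqrt\rho\cdot\sqrt\varepsilon\sqrt\rho u\in L^\infty L^{2\gamma/(\gamma+1)}$. The factors of $\varepsilon$ only help, since $\varepsilon\le1$. The remaining factor $\sqrt\varepsilon$ sits in front of a term involving $u$ in $L^2H^1$ and $\rho$ in $L^\infty L^\gamma$. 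Hölder's inequality closes exactly as in \cite{FNP} under $\gamma>3/2$, provided $\theta\le\frac{2}{3}\gamma-1$ (which is positive exactly when $\gamma>3/2$). The time-boundary terms vanish because $\phi$ is compactly supported; letting $\phi\uparrow1$ uses the $L^\infty$-in-time bounds.

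\textbf{Main obstacle.} The obstacle is the bookkeeping of Step 2. We must verify that every inertial term can be written with at least a $\sqrt\varepsilon$ absorbed into $\sqrt\varepsilon\sqrt{\rho_\varepsilon}u_\varepsilon$ and with the leftover powers of $\varepsilon$ nonnegative, so that the Hölder exponents match those of \cite{FNP} uniformly in $\varepsilon$. The term $\varepsilon\int\rho u\otimes u:\nabla\psi$ contains two factors of $u$ against only one energy factor. We handle it by bounding $\sqrt\varepsilon\sqrt\rho u$ in $L^\infty L^2$, $\sqrt\rho$ in $L^\infty L^{2\gamma}$, and $u$ in $L^2L^6$, which leaves $\nabla\psi\in L^2L^r$ with $\frac12+\frac1{2\gamma}+\frac16+\frac1r=1$. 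Elliptic estimates supply this for small $\theta$. We then take $\theta=\min\{\frac{2}{3}\gamma-1,\frac\gamma2\}$ (or smaller) and pass $m,k\to\infty$ by Fatou's lemma.
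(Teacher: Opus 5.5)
Your proposal is correct and is the argument the paper relies on. The paper gives no separate proof: it defers to the Bogovskii-type test with $\rho_\varepsilon^\theta-\fint\rho_\varepsilon^\theta$ from Section~4.1 of \cite{FNP}, combined with \eqref{uniform bounds from energy1}--\eqref{uniform bounds from energy3}. Your $\varepsilon$-bookkeeping is exactly the adaptation left implicit there. In general only $\sqrt{\varepsilon}\,u_\varepsilon$, not $u_\varepsilon$, is uniformly bounded in $L^2(0,T;L^6(\mathbb{T}^3))$, because $|\int_{\mathbb{T}^3}\rho_\varepsilon u_\varepsilon\,dx|$ is only $O(\varepsilon^{-1/2})$; but every inertial term carries enough powers of $\varepsilon$ for this to suffice.

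One slip concerns the value of $\theta$. The splittings you actually write down close only for $\theta\le\frac{\gamma}{3}-\frac12$, not for $\theta=\frac23\gamma-1$. These are $\sqrt\varepsilon\sqrt{\rho_\varepsilon}u_\varepsilon\in L^\infty L^2$, $\sqrt{\rho_\varepsilon}\in L^\infty L^{2\gamma}$ and $\sqrt\varepsilon u_\varepsilon\in L^2L^6$ against $\nabla\psi\in L^\infty L^{\gamma/\theta}$, and likewise $\sqrt\varepsilon\rho_\varepsilon u_\varepsilon\in L^\infty L^{2\gamma/(\gamma+1)}$ in the $\partial_t\psi$ term. You can either take this smaller, still positive, $\theta$, which is all the lemma requires. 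Or you can recover the range $\theta\le\frac23\gamma-1$ you quote by using $\rho_\varepsilon\in L^\infty L^\gamma$ together with $\sqrt\varepsilon u_\varepsilon\in L^2L^6$ twice, writing for instance $\varepsilon\rho_\varepsilon u_\varepsilon\otimes u_\varepsilon=\rho_\varepsilon(\sqrt\varepsilon u_\varepsilon)\otimes(\sqrt\varepsilon u_\varepsilon)$.
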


As a consequence of the previous lemma and basic energy inequality, which provides uniform energy bounds
and compactness properties for the density through the renormalized continuity
equation, we can establish the following proposition concerning the convergence
and compactness of the weak solutions as $\varepsilon \to 0$.

\begin{Proposition}
\label{prop of weak solution}
For any $\varepsilon>0$, there exists a subsequence of $(\rho_{\varepsilon},u_{\varepsilon})$ such that 
\begin{equation*}
\begin{split}
\label{convergence from energy}
&\rho_{\varepsilon}\to \rho \text{ in } C(0,T;L^{\gamma}_{weak}(\mathbb{T}^3)),\;\;\; \rho_{\varepsilon}\to \rho \text{ in } L^1(0,T;L^1(\mathbb{T}^3)),
\\&u_{\varepsilon}\to u \text{ weakly in } L^2(0,T;W^{1,2}(\mathbb{T}^3)),
\end{split}
\end{equation*}
and the weak limit $(\rho,u)$ is a weak solution to \eqref{system of limit}-\eqref{initial data for limit system}.
\end{Proposition}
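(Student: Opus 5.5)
We follow the Lions--Feireisl compactness scheme, using the uniform bounds \eqref{uniform bounds from energy1}--\eqref{uniform bounds from energy3} and Lemma~\ref{effective 1}; we assume the initial energy is bounded uniformly in $\varepsilon$ and that $\rho_0^\varepsilon\to\rho_0$ strongly in $L^\gamma$.

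\textbf{Step 1: weak limits.} Since the total mass $\int\rho_\varepsilon\,dx$ is conserved and $\nabla u_\varepsilon$ is bounded in $L^2L^2$, Poincaré's inequality gives a uniform $L^2(0,T;H^1)$ bound on $u_\varepsilon$. To see this, control the mean of $u_\varepsilon$ via $\int\rho_\varepsilon u_\varepsilon$ in the standard way, or equivalently via the momentum equation tested with constants, where the $\varepsilon$ factor makes the control trivial. Hence, up to a subsequence, $u_\varepsilon\rightharpoonup u$ weakly in $L^2H^1$.

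The continuity equation gives $\partial_t\rho_\varepsilon=-\Dv(\rho_\varepsilon u_\varepsilon)$, which is bounded in $L^2(0,T;W^{-1,q})$ for some $q>1$, because $\rho_\varepsilon u_\varepsilon\in L^2L^{6\gamma/(\gamma+6)}$. Combined with the $L^\infty L^\gamma$ bound, an Arzelà--Ascoli argument yields $\rho_\varepsilon\to\rho$ in $C([0,T];L^\gamma_{weak})$, and hence $\rho(0)=\rho_0$. Since $\gamma>3/2$, the embedding $L^\gamma\subset\subset H^{-1}$ holds, so $\rho_\varepsilon u_\varepsilon\rightharpoonup\rho u$ in $\mathcal D'$. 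Lemma~\ref{effective 1} gives equi-integrability of $\rho_\varepsilon^\gamma$, so $\rho_\varepsilon^\gamma\rightharpoonup\overline{p}$ weakly in $L^1$.

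\textbf{Step 2: passing to the limit in the momentum equation.} The inertial terms vanish. Indeed,
\[
\varepsilon\|\rho_\varepsilon u_\varepsilon\|_{L^\infty L^{2\gamma/(\gamma+1)}}\le \sqrt{\varepsilon}\,\|\sqrt{\rho_\varepsilon}\|_{L^\infty L^{2\gamma}}\,\|\sqrt{\varepsilon\rho_\varepsilon}u_\varepsilon\|_{L^\infty L^2}=O(\sqrt\varepsilon),
\]
and similarly $\varepsilon\rho_\varepsilon u_\varepsilon\otimes u_\varepsilon=\sqrt\varepsilon\,(\sqrt{\varepsilon\rho_\varepsilon}u_\varepsilon)\otimes(\sqrt{\rho_\varepsilon}u_\varepsilon)\to0$ in $L^1$. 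The term $\varepsilon\int m_0^\varepsilon\cdot\psi(0)\,dx$ tends to $0$ by the same reasoning together with \eqref{intial energy limit}. In the limit we obtain $\nabla\overline{p}=\nu\Delta u+(\nu+\lambda)\nabla\Dv u$ in $\mathcal D'$. Testing with $\psi(t,x)=\chi(t)\psi(x)$ gives the elliptic relation for a.e.\ $t$, once $\overline p=\rho^\gamma$ is established.

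\textbf{Step 3: strong convergence of the density (main obstacle).} This is the effective viscous flux argument. Test the momentum equation with $\phi\,\nabla\Delta^{-1}T_k(\rho_\varepsilon)$, where $T_k$ are the usual truncations. In the limit system there is no time derivative, so this step is simpler than in \cite{FNP}, but the $\varepsilon$-terms must be handled. The time-derivative term $\varepsilon\rho_\varepsilon u_\varepsilon\cdot\nabla\Delta^{-1}\partial_tT_k(\rho_\varepsilon)$ is $O(\sqrt\varepsilon)$, because $\partial_tT_k(\rho_\varepsilon)$ is bounded in $L^2H^{-1}$ by the renormalized equation \eqref{renormalized regularized}. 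The convective term is bounded by $\sqrt\varepsilon$ times the uniform bounds. The outcome is
\[
\overline{\rho^\gamma T_k(\rho)}-(2\nu+\lambda)\overline{T_k(\rho)\Dv u}=\overline{p}\,\overline{T_k(\rho)}-(2\nu+\lambda)\overline{T_k(\rho)}\Dv u .
\]
Next, compare the renormalized equations for $L_k(\rho_\varepsilon)$ and for the limit $\rho$. The latter is renormalized by the DiPerna--Lions argument, since $\rho\in L^{\gamma+\theta}\subset L^2$ when $\gamma\ge 9/5$; for $3/2<\gamma<9/5$ one uses the oscillation defect measure bound of \cite{F04}. Together with the monotonicity of the pressure, this gives $\int(\overline{\rho\log\rho}-\rho\log\rho)(t)\,dx\le0$, where the initial defect vanishes because $\rho_0^\varepsilon$ converges strongly. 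Therefore $\rho_\varepsilon\to\rho$ in $L^1L^1$, so $\overline p=\rho^\gamma$, and renormalized continuity passes to the limit. The hardest part will be Step 3 for $\gamma<9/5$, where the renormalization of the limit and the uniform control of the time-derivative commutator in the flux identity require careful $k$-truncation bookkeeping.
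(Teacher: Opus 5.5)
Your strategy is the same as the paper's. You obtain weak limits from the energy bounds and Lemma~\ref{effective 1}, and you kill the inertial terms with the factor $\sqrt{\varepsilon}$, using the same H\"older estimate as the paper. Strong convergence of $\rho_\varepsilon$ then comes from the effective-viscous-flux and oscillation-defect argument of \cite{FNP}, which you spell out in more detail than the paper, whose proof only cites it.

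There is, however, a genuine gap at the start of Step~1: the uniform bound of $u_\varepsilon$ in $L^2(0,T;H^1)$ is not justified. Both of your arguments for controlling the mean of $u_\varepsilon$ fail. The energy controls only $\nabla u_\varepsilon$ and $\varepsilon\int\rho_\varepsilon|u_\varepsilon|^2\,dx$, so the standard generalized Poincar\'e argument gives $\int\rho_\varepsilon|u_\varepsilon|\,dx\le C\varepsilon^{-1/2}$, not $O(1)$. Testing the momentum equation with constants gives $\int\rho_\varepsilon u_\varepsilon(t)\,dx=\int m_0^\varepsilon\,dx$. The factor $\varepsilon$ sits on both sides and cancels, and \eqref{intial energy limit} only yields $|\int m_0^\varepsilon\,dx|=o(\varepsilon^{-1/2})$.

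In fact the bound is false under the stated hypotheses. The pair $\rho_\varepsilon\equiv1$, $u_\varepsilon\equiv\varepsilon^{-1/4}e_1$ is an exact solution with bounded, well-prepared initial energy and $\rho_0^\varepsilon\to\rho_0$ strongly. Yet its $L^2(0,T;H^1)$ norm blows up, so no subsequence converges weakly. Galilean boosts by $\varepsilon^{-1/4}e_1$ of a family with non-constant limit density likewise destroy the equicontinuity of $\rho_\varepsilon$, hence its compactness in $C([0,T];L^\gamma_{weak})$.

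Every later step uses this bound:
\begin{itemize}
\item the $L^2L^{6\gamma/(\gamma+6)}$ bound on $\rho_\varepsilon u_\varepsilon$ for time equicontinuity;
\item $\sqrt{\rho_\varepsilon}u_\varepsilon\in L^2L^2$ for the convective term;
\item $T_k(\rho_\varepsilon)u_\varepsilon\in L^2L^6$ in the flux identity.
\end{itemize}
The paper's proof tacitly assumes the same bound, so what is missing is an extra hypothesis, such as $\sup_\varepsilon|\int m_0^\varepsilon\,dx|<\infty$. With it, momentum conservation and
\[
\Big|\fint u_\varepsilon\Big|\le M_\varepsilon^{-1}\Big(\Big|\int m_0^\varepsilon\,dx\Big|+C\|\rho_\varepsilon\|_{L^{\gamma}}\|\nabla u_\varepsilon\|_{L^2}\Big),\qquad M_\varepsilon=\int\rho_0^\varepsilon\,dx\to\int\rho_0\,dx>0,
\]
close the argument.

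A smaller slip occurs in Step~3. An $L^2H^{-1}$ bound on $\partial_tT_k(\rho_\varepsilon)$ only puts $\nabla\Delta^{-1}\partial_tT_k(\rho_\varepsilon)$ in $L^2L^2$. That cannot be paired with $\varepsilon\rho_\varepsilon u_\varepsilon\in L^\infty L^{2\gamma/(\gamma+1)}$, because $2\gamma/(\gamma+1)<2$. Use instead $\partial_tT_k(\rho_\varepsilon)\in L^2W^{-1,6}$, which follows from $T_k(\rho_\varepsilon)u_\varepsilon\in L^2L^6$ and the embedding $L^2\hookrightarrow W^{-1,6}$. Then $\nabla\Delta^{-1}\partial_tT_k(\rho_\varepsilon)\in L^2L^6$. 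The pairing closes exactly when $\frac{\gamma+1}{2\gamma}+\frac16\le1$, i.e.\ $\gamma\ge\frac32$, so the resulting $O(\sqrt{\varepsilon})$ bound holds.
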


\begin{proof}
With uniform bounds \eqref{uniform bounds from energy1}, \eqref{uniform bounds from energy2} and \eqref{uniform bounds from energy3}, and Lemma \ref {effective 1} at hands, we deduce that 
\begin{equation}
\begin{split}
\label{convergence from energy}
&\rho_{\varepsilon}\to \rho \text{ in } C(0,T;L^{\gamma}_{weak}(\mathbb{T}^3)),
\\&u_{\varepsilon}\to u \text{ weakly in } L^2(0,T;W^{1,2}(\mathbb{T}^3)),
\end{split}
\end{equation}

By Lemma \ref{effective 1},  we have 
\begin{equation}
\label{convergence of pressure}
\rho_{\varepsilon}^{\gamma}\to \bar{\rho^{\gamma}} \text{ weakly in } L^{\frac{\gamma+\theta}{\gamma}}((0,T)\times\mathbb{T}^3),
\end{equation}
where $\bar{f}$ is the weak limit of the sequence $\{f_n\}$.

For any $\gamma>\frac{3}{2}$,  and \eqref{convergence from energy} yields  
$$\rho_{\varepsilon}u_{\varepsilon}\to \rho u\text{ in}\;\;  D'((0,T)\times\mathbb{T}^3).$$

Note that 
\begin{equation*}
\begin{split}
\|\varepsilon\rho_{\varepsilon}u_{\varepsilon}\|_{L^{\infty}(0,T;L^{\frac{2\gamma}{\gamma+1}}(\mathbb{T}^3))}&\leq \sqrt{\varepsilon}\|\sqrt{\varepsilon\rho_{\varepsilon}}u_{\varepsilon}\|_{L^{\infty}(0,T;L^2(\mathbb{T}^3))}\|\sqrt{\rho_{\varepsilon}}\|_{L^{\infty}(0,T;L^{2\gamma}(\mathbb{T}^3))}
\\&\to 0 \quad\text{ as } \varepsilon \to 0,
\end{split}
\end{equation*}
thus we have $\varepsilon \rho_{\varepsilon}u_{\varepsilon}\to 0\text{ in } L^{\infty}(0,T;L^{\frac{2\gamma}{\gamma+1}}(\mathbb{T}^3)).$

With the help of  \eqref{convergence from energy},  this can ensure that 
$$\varepsilon\rho_{\varepsilon}u_{\varepsilon}\otimes u_{\varepsilon}\to 0 \text{ in }\;D'((0,T)\times\mathbb{T}^3)$$
for any $\gamma> \frac{3}{2}$.

Thus,  letting $\varepsilon\to 0$, we are able to recover the weak limit  $(\rho, u)$ of the sequence of solutions $(\rho_{\varepsilon},u_{\varepsilon})$. In particular, this weak limit  satisfies 
 \begin{equation}
\label{weak limit of system}
\begin{cases}
\partial_t \rho + \Dv(\rho u) = 0, \\[6pt]
\nabla \bar{\rho^{\gamma}} = \nu\Delta u + (\nu + \lambda)\nabla(\Dv u),
\end{cases}
\end{equation}
in the sense of $D'((0,T)\times\mathbb{T}^3).$  To show this weak limit is a weak solution to the equation, 
we still need to show that 
\[
\rho^{\gamma} = \overline{\rho^{\gamma}} .
\]
This follows from the strong convergence of the density in $L^1(0,T;L^1(\mathbb{T}^3))$.
The strong convergence of the density relies on uniform bounds and the weak continuity of the effective flux.
The proof closely follows the argument in Section~4 of \cite{FNP}.
Meanwhile, we can deduce the following energy inequality
\begin{equation}
\label{energy inequality for limit system}
\int_{\mathbb{T}^3}\frac{\rho^{\gamma}}{\gamma-1}\,dx+\int_0^T\int_{\mathbb{T}^3}\nu |\nabla u|^2+(\nu+\lambda)|\Dv u|^2\,dx\,dt\leq\int_{\mathbb{T}^3}\frac{\rho_0^{\gamma}}{\gamma-1}\,dx.
\end{equation}
\end{proof}

\section{Energy equality for the limit system}

In this section, we aim to upgrade the basic energy inequality
\eqref{energy inequality for limit system} to a full energy equality for all
weak solutions under consideration. To achieve this, we first require the
following lemma, which provides a crucial estimate allowing us to control the
pressure term  with respect to  the velocity in appropriate $L^p$ spaces.

\begin{Lemma}
\label{pressure bound}
Let $(\rho,u)$ be any weak solution to \eqref{system of limit}--\eqref{initial data for limit system}
satisfying the energy inequality \eqref{energy inequality for limit system}. Then
\[
\int_0^T \int_{\mathbb{T}^3} |\rho^{\gamma}|^2 \, dx \, dt \le C,
\]
where $C>0$ is a constant depending only on the initial data and the time $T>0$.
\end{Lemma}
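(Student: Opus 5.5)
The plan is to exploit the elliptic structure of the limiting momentum relation \eqref{eq:weak-mom} at almost every fixed time. Taking the divergence of $\nabla p(\rho)=\nu\Delta u+(\nu+\lambda)\nabla\Dv u$ gives $\Delta\big(\rho^\gamma-(2\nu+\lambda)\Dv u\big)=0$ on $\mathbb{T}^3$ in the distributional sense. On the torus the only harmonic distributions are constants, so for a.e. $t$
\[
\rho^\gamma(t,x)=(2\nu+\lambda)\Dv u(t,x)+c(t),\qquad c(t)=\fint_{\mathbb{T}^3}\rho^\gamma(t,x)\,dx ,
\]
because $\Dv u$ has zero mean on $\mathbb{T}^3$. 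In other words, the effective viscous flux $\rho^\gamma-(2\nu+\lambda)\Dv u$ is a function of time alone.

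To make this rigorous inside the weak formulation \eqref{eq:weak-mom}, I will choose the test function $\psi=\nabla\Delta^{-1}\big(\phi-\fint\phi\big)$ for arbitrary $\phi\in C^\infty(\mathbb{T}^3)$. This is admissible by density: $\rho^\gamma(t)\in L^1$, and $\nabla\psi$ is smooth. With this choice the integration by parts $\nabla u:\nabla\psi\to \Dv u\,\Dv\psi$ is valid on the torus. We have $\Dv\psi=\phi-\fint\phi$, so \eqref{eq:weak-mom} gives
\[
\int_{\mathbb{T}^3}\big(\rho^\gamma-(2\nu+\lambda)\Dv u\big)\Big(\phi-\fint\phi\Big)\,dx=0 .
\]
This yields the identity above. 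Since $\rho^\gamma\in L^1$ with the correct sign convention, the formula already shows that $\rho^\gamma(t)\in L^2$ whenever $\Dv u(t)\in L^2$.

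Squaring and integrating in time then gives
\[
\int_0^T\!\!\int_{\mathbb{T}^3}\rho^{2\gamma}\,dx\,dt\le 2(2\nu+\lambda)^2\int_0^T\!\!\int_{\mathbb{T}^3}|\Dv u|^2\,dx\,dt+2|\mathbb{T}^3|\int_0^T c(t)^2\,dt .
\]
I bound the first term using the dissipation in \eqref{energy inequality for limit system}, via $\|\Dv u\|_{L^2}\le C\|\nabla u\|_{L^2}$ and $\nu>0$. For the second term, $c(t)\le C\sup_t\|\rho(t)\|_{L^\gamma}^\gamma$, which is controlled by the initial energy through \eqref{energy inequality for limit system} (or through the $L^\infty(0,T;L^\gamma)$ bound). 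Hence $\int_0^T c^2\,dt\le CT$, and the constant depends only on the initial data and $T$.

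I expect the main obstacle to be the justification step. The weak formulation \eqref{eq:weak-mom} is stated only for smooth $\psi$, and a priori $\rho^\gamma(t)$ is only in $L^1$. It must therefore be checked that the distributional identity identifies $\rho^\gamma(t)$ as an $L^2$ function, and not merely as $L^1$ plus a constant. The duality argument handles this: the identity holds against every smooth $\phi$, hence as $L^1$ functions. It remains to confirm that the bound holds for a.e. $t$ with measurable $c(t)$, which follows from Fubini's theorem.
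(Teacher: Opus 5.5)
Your proof is correct, and it takes a genuinely different route from the paper's.

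The paper tests the momentum relation with a field built from the pressure itself, $S_m\bigl[\phi_m(t)\nabla\Delta^{-1}\bigl(S_m(\rho^\gamma)-\fint S_m(\rho^\gamma)\bigr)\bigr]$. Since $\rho^\gamma$ is a priori only in $L^\infty(0,T;L^1)$, the paper has to mollify, control the viscous terms through the $L^2$-boundedness of $\nabla^2\Delta^{-1}$ (the Bogovskii boundedness property), absorb a fraction of $\int\phi_m|S_m(\rho^\gamma)|^2$ by Young's inequality, and then let $m\to\infty$.

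You instead test with $\nabla\Delta^{-1}(\phi-\fint\phi)$ for arbitrary smooth $\phi$. This field is itself smooth, so it is admissible outright and no density argument is needed. You then use $\int\nabla u:\nabla^2 w\,dx=\int\Dv u\,\Delta w\,dx$ on $\mathbb{T}^3$ to conclude by duality that the effective viscous flux is constant in space. This gives an exact identity with an explicit constant, needs no mollification, absorption or singular-integral bounds, and carries more than the bound itself: any integrability of $\Dv u$ transfers directly to $\rho^\gamma$.

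What the paper's route buys is robustness. The Bogovskii test still works on bounded domains with no-slip conditions, where the flux is merely harmonic rather than constant. It also works when extra terms are present, such as the $\varepsilon$-inertial terms, where no exact identity is available; the same mechanism underlies Lemma~\ref{effective 1}.

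A minor remark: with \eqref{eq:weak-mom} exactly as printed (all plus signs, which is inconsistent with the PDE), your test gives $\rho^\gamma+(2\nu+\lambda)\Dv u=c(t)$ instead. Since you square, the bound is unaffected.
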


  \begin{proof}
  We choose
\[
\varphi(t,x)
=  S_m\!\left[ \phi_m(t)\nabla \Delta^{-1}\!\left( S_m(\rho^{\gamma})
- \fint_{\mathbb{T}^3} S_m(\rho^{\gamma}) \right) \right]
\]
as a test function for any weak solution $(\rho,u)$ to the system
\eqref{system of limit}--\eqref{initial data for limit system}, where
$S_m(f)=f*\eta_m$ denotes the standard spatial mollification operator;
and $\phi_m \in \mathcal{D}(0,T)$ is a time cut-off function satisfying
$\phi_m \to 1$ as $m \to \infty$.

We remark that the operator $\nabla\Delta^{-1}$ coincides with the
Bogovskii operator on the torus, up to the subtraction of the spatial mean.
More generally, let $\Omega \subset \mathbb{R}^3$ be a bounded Lipschitz domain.
The \emph{Bogovskii operator}
\[
\mathcal{B} : L_0^p(\Omega) \longrightarrow W_0^{1,p}(\Omega;\mathbb{R}^3),
\qquad 1<p<\infty,
\]
is a linear operator associated with the divergence equation. Specifically,
for any
\[
f \in L_0^p(\Omega)
:= \Big\{ f \in L^p(\Omega) : \int_\Omega f\,dx = 0 \Big\},
\]
the vector field $u=\mathcal{B}[f]$ satisfies
\[
\Dv u = f \quad \text{in } \Omega,
\qquad
u|_{\partial \Omega} = 0.
\]

In the present proof, in order to control the terms $I_2$ and $I_3$ below, we rely
on the following standard properties of the Bogovskii operator:

\begin{enumerate}
\item \textbf{Boundedness.}
There exists a constant $C=C(\Omega,p)>0$ such that
\[
\|\nabla \mathcal{B}[f]\|_{L^p(\Omega)}
\le C \|f\|_{L^p(\Omega)}.
\]

\item \textbf{Divergence property.}
By construction, the operator satisfies
\[
\Dv \mathcal{B}[f] = f \quad \text{a.e. in } \Omega.
\]
\end{enumerate}

Testing \eqref{system of limit} with this function, we are able to derive that
  \begin{equation}
  \label{derive of control for pressure}
  \begin{split}
 & \int_0^T\int_{\mathbb{T}^3}\phi_m(t)|S_m(\rho^{\gamma})|^2\,dx\,dt =   \int_0^T\int_{\mathbb{T}^3}\phi_m(t) S_m(\rho^{\gamma}) \fint_{\mathbb{T}^3}S_m(\rho^{\gamma})\,dx\,dx\,dt
  \\&+\nu\int_0^T\int_{\mathbb{T}^3}\phi_m(t)\nabla S_m(u) (\partial_i\partial_j\Delta^{-1})\left(S_m(\rho^{\gamma})-\fint_{\mathbb{T}^3}S_m(\rho^{\gamma})\right)\,dx\,dt
\\&  +(\nu+\lambda)\int_0^T\int_{\mathbb{T}^3}\phi_m(t)\Dv S_m(u)\left(S_m(\rho^{\gamma})-\fint_{\mathbb{T}^3}S_m(\rho^{\gamma})\right)\,dx\,dt
  \\&=I_1+I_2+I_3,
    \end{split}
  \end{equation}where we  used \textbf{Divergence property} of Bogovskii operator to obtain
 $I_3$.
  
  Using \eqref{energy inequality for limit system}, we can control $|I_1|\leq C_1$, where $C_1$ is a constant only depending on time $T$ and the initial data.

To control $I_2$, we rely on \textbf{Boundedness} property of  the Bogovskii operator,
  this gives us 
  $$|I_2|\leq C_2\int _0^T\int_{\mathbb{T}^3}|\nabla S_m(u)|^2\,dx\,dt+\frac{1}{8}\int_0^T\int_{\mathbb{T}^3}\phi_m(t)|S_m(\rho^{\gamma})|^2\,dx\,dt.$$
 
    We are able to control $I_3$ as similarly
 $$|I_3|\leq C_3\int _0^T\int_{\mathbb{T}^3}|\Dv S_m(u)|^2\,dx\,dt+\frac{1}{8}\int_0^T\int_{\mathbb{T}^3}\phi_m(t)|S_m(\rho^{\gamma})|^2\,dx\,dt.$$
Using these controls of $I_1$, $I_2$ and $I_3$ to \eqref{derive of control for pressure},  this yields 
\begin{equation*}
\begin{split}
\int_0^T\int_{\mathbb{T}^3}\phi_m(t)|S_m(\rho^{\gamma})|^2\,dx\,dt &\leq C_1+C_2\int _0^T\int_{\mathbb{T}^3}|\nabla S_m(u)|^2\,dx\,dt
\\&+C_3\int _0^T\int_{\mathbb{T}^3}|\Dv S_m(u)|^2\,dx\,dt
\end{split}
\end{equation*} for all $m>0$. 

Thus, letting $m \to \infty$ in the above inequality, we obtain
\begin{equation*}
\begin{aligned}
\int_0^T \int_{\mathbb{T}^3} |\rho^{\gamma}|^2 \, dx \, dt
&\le C_1
+ C_2 \int_0^T \int_{\mathbb{T}^3} |\nabla u|^2 \, dx \, dt
+ C_3 \int_0^T \int_{\mathbb{T}^3} |\Dv u|^2 \, dx \, dt .
\end{aligned}
\end{equation*}
Observe that the right-hand side of the above inequality can be controlled by the basic energy inequality \eqref{energy inequality for limit system}. Consequently, we obtain the bound of the pressure $\rho^{\gamma}$.
  \end{proof}
  
Once an $L^{p}$-bound for the pressure term has been established in Lemma \ref{pressure bound}, we are in a position to upgrade the basic energy inequality to an energy equality. More precisely, we prove the following result.

  \begin{Proposition}
  \label{Propo for energy}
Let $(\rho,u)$ be any weak solution to \eqref{system of limit}--\eqref{initial data for limit system}
satisfying the energy inequality \eqref{energy inequality for limit system}. Then the following energy equality
\begin{equation}
\label{energy equality}
\int_{\mathbb{T}^3}\frac{\rho^{\gamma}}{\gamma-1}\,dx+\int_0^T\int_{\mathbb{T}^3}\nu |\nabla u|^2+(\nu+\lambda)|\Dv u|^2\,dx\,dt=\int_{\mathbb{T}^3}\frac{\rho_0^{\gamma}}{\gamma-1}\,dx
\end{equation}
holds.
  \end{Proposition}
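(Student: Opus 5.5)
We prove the reverse inequality by testing the elliptic relation \eqref{eq:weak-mom} with a regularized velocity and converting the pressure term, via the renormalized continuity equation, into the time derivative of $\int\rho^\gamma/(\gamma-1)$. Formally, multiplying the momentum relation by $u$ gives
\[
\int_{\mathbb{T}^3}\big(\nu|\nabla u|^2+(\nu+\lambda)|\Dv u|^2\big)\,dx=\int_{\mathbb{T}^3}\rho^\gamma\Dv u\,dx .
\]
The renormalized equation with $b(\rho)=\rho^\gamma/(\gamma-1)$, for which $b'(\rho)\rho-b(\rho)=\rho^\gamma$, gives
\[
\frac{d}{dt}\int_{\mathbb{T}^3}\frac{\rho^\gamma}{\gamma-1}\,dx=-\int_{\mathbb{T}^3}\rho^\gamma\Dv u\,dx .
\]
Adding the two identities and integrating in time yields \eqref{energy equality}. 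The task is to justify both steps at the available regularity.

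\emph{Step 1: momentum identity.} For a.e.\ $t$, take $\psi=S_m(S_m u(t))$ in \eqref{eq:weak-mom}, or equivalently mollify the equation and test with $S_m u$. We have $u\in L^2(0,T;H^1)$, and $\rho^\gamma\in L^2((0,T)\times\mathbb{T}^3)$ by Lemma~\ref{pressure bound}. Hence $\int\rho^\gamma\,\Dv S_m^2u\to\int\rho^\gamma\Dv u$ in $L^1(0,T)$, and the viscous terms converge by strong convergence of $\nabla S_m^2u$ in $L^2$. Integrating in time, we get for every $t$
\[
\int_0^t\!\!\int_{\mathbb{T}^3}\big(\nu|\nabla u|^2+(\nu+\lambda)|\Dv u|^2\big)=\int_0^t\!\!\int_{\mathbb{T}^3}\rho^\gamma\Dv u .
\]
Lemma~\ref{pressure bound} is exactly what makes the right-hand side meaningful, since $\rho^\gamma\Dv u\in L^1$.

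\emph{Step 2: pressure identity.} This is the main obstacle, because $b(\rho)=\rho^\gamma$ is not an admissible renormalization: it is not bounded and $b'$ does not vanish for large $z$. I would use truncations $b_k(z)=T_k(z)^\gamma/(\gamma-1)$, with $T_k$ a smooth concave cutoff equal to $z$ for $z\le k$ and constant for $z\ge 2k$. Then $b_k'(z)z-b_k(z)\to z^\gamma$ pointwise, and $|b_k'(z)z-b_k(z)|\le C z^\gamma$. The renormalized formulation with a test function $\varphi(s)=\mathbf 1_{[0,t]}(s)$, approximated by smooth cutoffs and integrated over the torus so that the flux term drops, gives
\[
\int b_k(\rho(t))-\int b_k(\rho_0)=-\int_0^t\!\!\int\big(b_k'(\rho)\rho-b_k(\rho)\big)\Dv u .
\]
Here I use the weak continuity $\rho\in C([0,T];L^\gamma_{weak})$ from Proposition~\ref{prop of weak solution}, together with the continuity of $\int b_k(\rho(t))$ that follows from renormalization. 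Letting $k\to\infty$, the left side converges by monotone convergence, since $\rho(t)\in L^\gamma$. The right side converges by dominated convergence with dominant $C\rho^\gamma|\Dv u|\in L^1$, again using Lemma~\ref{pressure bound} and Cauchy–Schwarz.

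\emph{Step 3: conclusion.} Combining Steps 1 and 2 at $t=T$, or at any $t$, gives \eqref{energy equality}. If the time-continuity of $\int\rho^\gamma$ is delicate, I would first obtain the identity for a.e.\ $t$ and then extend it using the energy inequality together with weak lower semicontinuity. The key point to check carefully is the uniform domination in the truncation limit, which rests entirely on Lemma~\ref{pressure bound}.
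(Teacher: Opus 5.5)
Your argument is correct, but it handles the renormalization step differently from the paper. The paper asserts that the limit is a renormalized solution for every $b\in C^1(\mathbb{R})$ and plugs in $b(\rho)=\rho^\gamma$ directly. It then mollifies in space and shows the commutator $R_m\to0$ in $L^1$ via $\rho^\gamma\in L^2$ (Lemma~\ref{pressure bound}), while testing the momentum relation with $S_m(S_m u)$ as in your Step~1. Since $\int_{\mathbb{T}^3}S_m f\,dx=\int_{\mathbb{T}^3}f\,dx$, that commutator estimate, once integrated over the torus, only reproduces the limit $\int\rho^\gamma\Dv S_m^2u\to\int\rho^\gamma\Dv u$ of your Step~1. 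The admissibility of the unbounded $b(\rho)=\rho^\gamma$ is left unjustified there, and Definition~\ref{def weak solution for limit system} does not grant it ($b'$ must vanish for large arguments). Your truncations $b_k=T_k^\gamma/(\gamma-1)$ satisfy $b_k'(z)z-b_k(z)=z^\gamma$ for $z\le k$ and $|b_k'(z)z-b_k(z)|\le Cz^\gamma$. Combined with dominated convergence with dominant $C\rho^\gamma|\Dv u|\in L^1$, they supply exactly that missing step, and they use Lemma~\ref{pressure bound} in the same essential place. Your sign in Step~1 is the one dictated by \eqref{system of limit}. The printed weak form \eqref{eq:weak-mom} has the viscous terms with the opposite sign (the paper's passage to \eqref{smooth energy} contains a compensating slip), so test the correctly signed relation rather than \eqref{eq:weak-mom} literally.

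On time continuity, which the paper does not address: the identity for a.e.\ $t$ comes for free from Lebesgue points. For every $t$, your main line works once ``continuity from renormalization'' is made precise as the standard fact $\rho\in C([0,T];L^1(\mathbb{T}^3))$ for renormalized solutions with these bounds. This is obtained by renormalizing with a bounded $b$ and with $b^2$ to get strong continuity of $b(\rho)$, then removing the truncation with the $L^\infty(0,T;L^\gamma)$ bound. Since $b_k$ is globally Lipschitz, $t\mapsto\int b_k(\rho(t))\,dx$ is then continuous. Your fallback, however, would not work. Weak lower semicontinuity together with the energy inequality only gives the inequality $\le$ at the exceptional times, which is just the energy inequality again, never the reverse.
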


  \begin{proof}
 Note that the weak limit $(\rho,u)$ is a renormalized solution. That is,
\[
\partial_t b(\rho)+\operatorname{div}(b(\rho)u)
+\big(b'(\rho)\rho-b(\rho)\big)\operatorname{div}u=0
\]
holds in the sense of $\mathcal{D}'((0,T)\times\mathbb{T}^3)$ for any
$b\in C^{1}(\mathbb{R})$.

We regularize the above equation to obtain
\begin{equation}
\label{renormalized regu}
\partial_t S_m(b(\rho))
+\operatorname{div}\big(S_m(b(\rho)u)\big)
+S_m\!\left(b'(\rho)\rho-b(\rho)\right) \Dv(S_m(u))
=R_m,
\end{equation}
where the commutator term $R_m$ is given by
\[
R_m
:=S_m\!\left(b'(\rho)\rho-b(\rho)\right) \Dv (S_m(u))
- S_m\!\left(\big(b'(\rho)\rho-b(\rho)\big)\operatorname{div}u\right).
\]

To derive the energy equality, we choose $S_m(S_m(u))$ as a test function to derive the following one
\begin{equation}
\label{derivation of energy equality}
\begin{split}
\int_0^T\int_{\mathbb{T}^3}S_m(\rho^\gamma)\Dv (S_m(u))\,dx\,dt
&=-\nu \int_0^T\int_{\mathbb{T}^3}|\nabla S_m(u)|^2\,dx\,dt
\\&-(\nu+\lambda)\int_0^T\int_{\mathbb{T}^3}|\Dv S_m(u)|^2\,dx\,dt.
\end{split}
\end{equation}
We take $b(\rho)=\rho^{\gamma}$ for \eqref{renormalized regu}, then 
$$\partial_tS_m(\rho^{\gamma})+\Dv(S_m(\rho^{\gamma})u)+(\gamma-1)S_m(\rho^{\gamma})\Dv S_m(u)=R_m.$$
Thus, 
the left hand side of \eqref{derivation of energy equality} will be given by 
$$\int_0^T\int_{\mathbb{T}^3}\frac{1}{\gamma-1}\partial_tS_m(\rho^{\gamma})\,dx\,dt+\int_0^T\int_{\mathbb{T}^3}R_m\,dx\,dt.$$
This gives us the following one
\begin{equation}
\label{smooth energy}
\begin{split}&
\int_{\mathbb{T}^3}\frac{1}{\gamma-1}S_m(\rho^{\gamma})\,dx\,dt+\int_0^T\int_{\mathbb{T}^3}R_m\,dx\,dt
\\&+\nu\int_0^T \int_{\mathbb{T}^3}|\nabla S_m(u)|^2\,dx\,dt+(\nu+\lambda)\int_0^T\int_{\mathbb{T}^3}|\Dv S_m(u)|^2\,dx\,dt
\\&=\int_{\mathbb{T}^3}\frac{1}{\gamma-1}S_m(\rho_0^{\gamma})\,dx
\end{split}
\end{equation}

Now let us to show that $R_m \to 0$ in $L^1((0,T)\times\mathbb T^d)$.
Recall that for $b(\rho)=\rho^\gamma$ we have
\begin{equation*}
\begin{split}&
R_m
:= S_m\!\big((b'(\rho)\rho-b(\rho))\big)\,\operatorname{div} S_m(u)
   - S_m\!\Big(\big(b'(\rho)\rho-b(\rho)\big)\operatorname{div} u\Big)
   \\&
= (\gamma-1)\Big(S_m(\rho^\gamma)\,\operatorname{div} S_m u - S_m(\rho^\gamma\,\operatorname{div} u)\Big).
\end{split}
\end{equation*}

This allows us to rewrite it as 
\[
R_m = (\gamma-1)\,\Big(
\underbrace{S_m(\rho^\gamma)\,\operatorname{div} S_m u - S_m(\rho^\gamma)\,\operatorname{div} u}_{=:A_m}
+
\underbrace{S_m(\rho^\gamma)\,\operatorname{div} u - S_m(\rho^\gamma\,\operatorname{div} u)}_{=:B_m}
\Big),
\]
thus we only need to handle $A_m$ and $B_m$ to control $R_m\to 0$ as $m$ goes to large.

\medskip
\noindent\emph{Estimate of $A_m$.}
By Hölder in $(t,x)$ with exponents $2$--$2$ and the strong convergence
$\operatorname{div} S_m u \to \operatorname{div} u$ in $L^2((0,T)\times\mathbb T^d)$,
\[
\|A_m\|_{L^1((0,T)\times\mathbb T^d)}
\le \|S_m(\rho^\gamma)\|_{L^2((0,T)\times\mathbb T^d)}\,
   \|\operatorname{div} S_m u - \operatorname{div} u\|_{L^2((0,T)\times\mathbb T^d)} \xrightarrow[m\to\infty]{} 0.
\]
Here we used that $S_m(\rho^\gamma)\to \rho^\gamma$ in $L^2$ and is uniformly bounded in
$L^2$ thanks to Lemma~3.1 (i.e. $\rho^\gamma\in L^2((0,T)\times\mathbb T^d)$).

\medskip
\noindent\emph{Estimate of $B_m$ (Friedrichs commutator).}
We introduce  $f:=\rho^\gamma$, and $g:=\operatorname{div} u$, then $$f\in L^2((0,T)\times\mathbb T^d)$$ by Lemma~3.1, 
and $$g\in L^2((0,T)\times\mathbb T^d)$$ since $u\in L^2(0,T;H^1(\mathbb T^d))$.
The mollifiers $(S_m)_{m\ge1}$ satisfy the standard product-commutator property:
for any $f\in L^p$, $g\in L^q$ with $\frac1p+\frac1q=1$,
\[
\|S_m(f)\,g - S_m(fg)\|_{L^1} \xrightarrow[m\to\infty]{} 0.
\]
Applying this with $p=q=2$ yields
\[
\|B_m\|_{L^1((0,T)\times\mathbb T^d)}
= \|S_m(f)\,g - S_m(fg)\|_{L^1((0,T)\times\mathbb T^d)}
\xrightarrow[m\to\infty]{} 0.
\]

\medskip
\noindent
Combining the estimates for $A_m$ and $B_m$, we conclude that
\[
\|R_m\|_{L^1((0,T)\times\mathbb T^d)} \xrightarrow[m\to\infty]{} 0,
\]
i.e. $R_m \to 0$ strongly in $L^1(0,T;L^1(\mathbb T^d))$.

Consequently, by passing to the limit \(m\to\infty\) in
\eqref{smooth energy}, we are able to recover the energy equality
\eqref{energy equality}.

\end{proof}


\section{Proof to the Main theorem}

 The goal of this section is to rigorously justify the inertial limit of weak solutions to the compressible Navier--Stokes equations. In view of Proposition~\ref{prop of weak solution}, by letting $\varepsilon\to0$ we obtain a weak solution to the limit system
\eqref{system of limit}--\eqref{initial data for limit system}, which satisfies
Definition~\ref{def weak solution for limit system}. 

A key difficulty in the limiting procedure is the possible loss of energy in the weak formulation. This issue is resolved by Proposition~\ref{Propo for energy}, which shows that any weak solution of the limit system obtained in this way satisfies the energy equality
\eqref{energy equality}. As a consequence, the inertial limit preserves the full energy balance of the system.

To prove our main result, we still need to show 
\begin{equation}
\label{eq:kinetic-vanish-fixed-t} 
\varepsilon\int_{\mathbb{T}^3}\rho_{\varepsilon}(x,t)|u_{\varepsilon}(x,t)|^{2}\,dx
\;\longrightarrow\; 0
\qquad\text{as }\varepsilon\to0,
\end{equation}
for any fixed $t\in[0,T].$
To this end, 
we argue by contradiction.
Assume that \eqref{eq:kinetic-vanish-fixed-t} is false for this fixed $t$.
Then there exist a constant $\varepsilon_0>0$ and a sequence $\varepsilon_k\to 0$ such that
\begin{equation}\label{eq:contradiction-lowerbound}
\varepsilon_k\int_{\mathbb T^d}\rho_{\varepsilon_k}(t,x)\,|u_{\varepsilon_k}(t,x)|^2\,dx
\ge \varepsilon_0
\qquad \text{for all }k.
\end{equation}

By the energy inequality for the compressible Navier--Stokes system, for any fixed t,  we have
for each \(\varepsilon_k\),
\begin{align*}
&\frac{\varepsilon_k}{2}\int_{\mathbb{T}^3}\rho_{\varepsilon_k}(x,t)|u_{\varepsilon_k}(x,t)|^{2}\,dx
+ \int_{\mathbb{T}^3}\frac{\rho_{\varepsilon_k}^{\gamma}(x,t)}{\gamma-1}\,dx 
\\&+ \int_0^t\int_{\mathbb{T}^3}
\nu|\nabla u_{\varepsilon_k}(x,s)|^2
+(\nu+\lambda)|\Dv u_{\varepsilon_k}(x,s)|^2\,dx\,ds
\\&\;\le\;\varepsilon_k\int_{\mathbb{T}^3}\frac{1}{2}\rho^0_{\varepsilon_k}|u^0_{\varepsilon_k}|^2\,dx+
\int_{\mathbb{T}^3}\frac{\rho_0^{\gamma}}{\gamma-1}\,dx .
\end{align*}

Using the assumed lower bound, this yields
\begin{align*}
\frac{\varepsilon_0}{2}
+ \int_{\mathbb{T}^3}\frac{\rho_{\varepsilon_k}^{\gamma}(x,t)}{\gamma-1}\,dx 
&+ \int_0^t\int_{\mathbb{T}^3}
\nu|\nabla u_{\varepsilon_k}(x,s)|^2
+(\nu+\lambda)|\Dv u_{\varepsilon_k}(x,s)|^2\,dx\,ds
\\&\;\le\;\varepsilon_k\int_{\mathbb{T}^3}\frac{1}{2}\rho^0_{\varepsilon_k}|u^0_{\varepsilon_k}|^2\,dx+
\int_{\mathbb{T}^3}\frac{\rho_0^{\gamma}}{\gamma-1}\,dx .
\end{align*}

Passing to the limit \(k\to\infty\), and using the convergences established
earlier together with the lower semicontinuity of the dissipation terms, we
obtain
\begin{align*}
\frac{\varepsilon_0}{2}
+ \int_{\mathbb{T}^3}\frac{\rho^{\gamma}(x,t)}{\gamma-1}\,dx &+ \int_0^t\int_{\mathbb{T}^3}
\nu|\nabla u(x,s)|^2\,dx\,dt
\\&+\int_0^t\int_{\mathbb{T}^3}
(\nu+\lambda)|\Dv u(x,s)|^2\,dx\,ds
\;\le\;
\int_{\mathbb{T}^3}\frac{\rho_0^{\gamma}}{\gamma-1}\,dx .
\end{align*}

Since \(\varepsilon_0>0\), this implies
\begin{align*}
 \int_{\mathbb{T}^3}\frac{\rho^{\gamma}(x,t)}{\gamma-1}\,dx &+ \int_0^t\int_{\mathbb{T}^3}
\nu|\nabla u(x,s)|^2\,dx\,dt
\\&+\int_0^t\int_{\mathbb{T}^3}
(\nu+\lambda)|\Dv u(x,s)|^2\,dx\,ds
<
\int_{\mathbb{T}^3}\frac{\rho_0^{\gamma}}{\gamma-1}\,dx, 
\end{align*}which contradicts Proposition \ref{Propo for energy}, that is, 
$(\rho,u)$  satisfies the energy equality \eqref{energy equality}.
\medskip

Consequently, we have established \eqref{eq:kinetic-vanish-fixed-t} for any fixed $t\in[0,T]$. This completes the proof of our main result.

  \section*{Acknowledgments}

The author  is partially supported
by the NSF grant: DMS-2510425, and by the Simons Foundation: MPS-TSM-00007824.


\bigskip\bigskip
\begin{thebibliography}{99}


\bibitem{BellaOschmann2022}
P.~Bella and F.~Oschmann,
\newblock Homogenization and low Mach number limit of compressible Navier--Stokes equations in critically perforated domains,
\newblock \emph{Journal of Mathematical Fluid Mechanics}, 24(3):79, 2022.



 \bibitem{BDGL}D. Bresch, B. Desjardins, E. Grenier, and C.-K. Lin, Low Mach number limit of viscous polytropic flows: formal
asymptotics in the periodic case, Stud. Appl. Math., 109 (2): 125--149, 2002.


\bibitem{BFH}D. Breit, E. Feireisl, M. Hofmanova, Incompressible limit for compressible fluids with stochastic forcing,
Arch. Ration. Mech. Anal. 222 (2016), no. 2, 895--926.



\bibitem{CVWY} M. Chen, A. Vasseur, D. Wang and C. Yu,  Universality in the Low Mach number limit via a convex integration framework, arXiv:2601.19744, 2026.


 \bibitem{Lions} P.-L. Lions, 
Mathematical topics in fluid mechanics. Vol. 2.
Compressible models, 
Oxford University Press, New York, 1998. xiv+348 pp.
ISBN:0-19-851488-3
\bibitem{FNP}
E. Feireisl, A.  Novotný, H.  Petzeltová,
 On the existence of globally defined weak solutions to the Navier-Stokes equations.
J. Math. Fluid Mech. 3 (2001), no. 4, 358–392.



\bibitem{F04} E. Feireisl, Dynamics of viscous compressible fluids.
Oxford University Press, Oxford, 2004. xii+212 pp.
ISBN:0-19-852838-8




\bibitem{Feireisl-incom} 
E. Feireisl, Incompressible limits and propagation of acoustic waves in large domains with boundaries, Comm. Math. Phys., 294(1):73--95, 2010.




\bibitem{FKM}
E. Feireisl, C. Klingenberg, S. Markfelder, On the low Mach number limit for the compressible Euler
system, SIAM J. Math. Anal. 51(2):1496–1513, 2019

\bibitem{FKMV} E. Feireisl, O. Kreml, V. Macha, S.Necasova, 
On the low Mach number limit of compressible flows in exterior moving domains, J. Evol. Equ. 16 (2016), no. 3, 705--722.


\bibitem{Fu}
M. Fujii,  Low Mach number limit of the global solution to the compressible Navier--Stokes system for large data in the critical Besov space, Math. Ann. 388 (2024), no. 4, 4083--4134.



\bibitem{HNO}
R.~M.~Höfer,  S.~Nečasová, and F.~Oschmann,
\newblock Quantitative homogenization of the compressible Navier--Stokes equations towards Darcy's law,
\newblock \emph{Annales de l'Institut Henri Poincaré C, Analyse Non Linéaire}, 
published online 23 May 2025.



\bibitem{KM}
S. Klainerman and A. Majda, Singular limits of quasilinear systems with large
parameter and the incompressible limit of compressible fluids, Comm. Pure Appl. Math. 34 (1981), 481-524.


\bibitem{KM2} S. Klainerman and A. Majda, Compressible and incompressible fluids, Comm.
Pure Appl. Math. 35 (1982), 637-656.


\bibitem{LM} P.-L. Lions and N. Masmoudi, Incompressible limit for a viscous compressible fluid, J. Math. Pures Appl. (9),
77(6):585--627, 1998.





\bibitem{MS0} G. Metivier and S. Schochet, The incompressible limit of the non-isentropic Euler equations, Arch. Ration. Mech. Anal., 158(1): 61--90, 2001.

\bibitem{MK} S. Markfelder and C. Klingenberg, The Riemann problem for the multidimensional isentropic system of gas dynamics is ill-posed if it contains a shock, Arch. Ration. Mech. Anal., 227:967--994, 2018.




\bibitem{Ukai}S. Ukai, The incompressible limit and the initial layer of the compressible Euler equation, J. Math. Kyoto Univ., 26(2): 323--331, 1986.




\end{thebibliography}
\end{document}